\documentclass[12pt,oneside]{amsart}

\pdfoutput=1
\usepackage{xcolor}

\usepackage[section]{placeins}
\usepackage[margin=1in]{geometry}
\usepackage{mathabx}
\usepackage{xcolor}
\usepackage{graphicx}
\usepackage{amssymb}
\usepackage{tikz}
\usepackage{latexsym}
\usepackage{epsfig}
\usepackage{enumitem}
\usepackage{multicol}
\usepackage{wasysym}
\usepackage{amsmath}
\usepackage{amsthm}
\usepackage{amscd}
\usepackage{ulem}
\usepackage{soul}
\usepackage{multirow}
\usepackage{rotating}
\usepackage{pdflscape}
\usepackage{float}
\usepackage{dsfont}
\usepackage{comment}
\usepackage{booktabs}

\usepackage[colorlinks,citecolor=red,pagebackref,hypertexnames=false]{hyperref}
\hypersetup{
  colorlinks = true,
  citecolor=red,
  linkcolor  = red
}

\setlist{itemsep=.06125in}

\restylefloat{table}
\numberwithin{equation}{section}

\theoremstyle{plain}
\newtheorem{theorem}{Theorem}[section]
\newtheorem{lemma}[theorem]{Lemma}

\newtheorem{proposition}[theorem]{Proposition}

\theoremstyle{definition}

\theoremstyle{remark}
\newtheorem{remark}[theorem]{Remark}

\date{\today}

\author{W. Burstein, A. Iosevich, and A. Sant}
\address{Department of Mathematics, University of Rochester, Rochester, NY, USA}
\email{willburst88@gmail.com}
\address{Department of Mathematics, University of Rochester, Rochester, NY, USA}
\email{iosevich@gmail.com}
\address{Department of Mathematics, University of Rochester, Rochester, NY, USA}
\email{asant2@ur.rochester.edu}

\thanks{A. I. was supported in part by the National Science Foundation under NSF DMS-2154232.}

\title{Elliptic curves, Fourier ratio, and sampling complexity}

\begin{document}

\begin{abstract}
We study the normalized Frobenius trace associated with the Legendre family of elliptic curves over $\mathbb F_p$ from the point of view of Fourier complexity. If
\[
f(t)=\frac{a_p(E_t)}{\sqrt p},
\qquad
E_t:\ y^2=x(x-1)(x-t),
\]
with $f(0)=f(1)=0$, then
\[
\frac{\|\widehat f\|_1}{\|\widehat f\|_2}\asymp \sqrt p.
\]
More precisely, the Fourier transform of $f$ has squared $\ell^2$ norm of order $p$ while its individual coefficients remain uniformly bounded. It follows that no Fourier model supported on fewer than a sufficiently small constant multiple of $p$ frequencies can approximate $f$ in $\ell^2$ with error smaller than a fixed proportion of $\|f\|_2$.

We also show that the Fourier magnitude profile of $f$ supports a family of at least $\exp(cp)$ real-valued functions with identical Fourier magnitudes and identical Fourier ratio, any two of which are separated by at least $c\sqrt p$ in $\ell^2$. Consequently, every deterministic reconstruction procedure that recovers all members of this family from bounded-precision point evaluations must use at least $c_Bp$ samples, where $c_B>0$ depends only on the number of bits used to encode each observation. The arithmetic input is unconditional and relies only on the Weil bound for mixed character sums, the evaluation of the quadratic Gauss sum, and elementary character identities.
\end{abstract}

\subjclass[2020]{Primary 11G20; Secondary 11L07, 43A25, 94A12}

\keywords{Elliptic curves, Frobenius trace, Fourier ratio, spectral complexity, approximation theory}

\maketitle

\section{Introduction}

The purpose of this paper is to study the complexity of arithmetic data arising from elliptic curves over finite fields from the point of view of Fourier analysis. In particular, we consider the Frobenius traces associated with one-parameter families of elliptic curves and investigate their spectral complexity.

Let $p\ge 5$ be a prime, and consider the Legendre family of elliptic curves
$$
E_t:\ y^2 = x(x-1)(x-t), \qquad t \in {\mathbb F}_p \setminus \{0,1\}.
$$

Associated to each $E_t$ is the Frobenius trace $a_p(E_t)$ \cite{SilvermanAEC}, which measures the deviation of the number of points on $E_t$ from the expected value $p+1$. We define
$$
f(t)=\frac{a_p(E_t)}{\sqrt p}
\qquad\text{for }t\in\mathbb F_p\setminus\{0,1\},
$$
and extend $f$ to all of $\mathbb F_p$ by setting
$$
f(0)=f(1)=0.
$$
By Hasse's bound, $|f(t)|\le 2$, so $f$ is a bounded real-valued function on $\mathbb F_p$.

The central question we address is the following: to what extent can the function $f$ be approximated by low-complexity spectral models? More concretely, how many Fourier coefficients are needed to approximate $f$ accurately in $L^2$?

Our approach is based on the Fourier Ratio, defined as the ratio of the $\ell^1$ norm to the $\ell^2$ norm of the Fourier transform of a function. This quantity serves as a measure of spectral complexity. Functions with small Fourier Ratio exhibit concentration in frequency space and are therefore easier to approximate, while functions with large Fourier Ratio are spread out and exhibit pseudorandom behavior.

The Fourier Ratio has recently emerged as a useful tool in a variety of settings, including signal complexity, spectral synthesis, approximation theory, recovery, and sampling complexity \cite{Aldaleh2025, BIKN2026, ILPY25, IMW2026, DI26}. In this paper we show that the normalized Frobenius trace associated with the Legendre family has large Fourier Ratio. More precisely, we prove that
\[
\frac{\|\widehat{f}\|_1}{\|\widehat{f}\|_2} \asymp \sqrt{p},
\]
i.e., the Fourier mass of $f$ is spread across $\asymp p$ frequencies at essentially bounded size.

This result reflects the well-known phenomenon that Frobenius traces in families of elliptic curves exhibit strong cancellation and behave in many respects like random sequences. The Sato--Tate law describes the limiting distribution of normalized traces, and Michel established corresponding results for families of elliptic curves \cite{Michel}. The square-root cancellation used below, however, is the classical Weil bound for mixed character sums; see, for example, \cite{IK}. The result obtained here can therefore be viewed as a discrete manifestation of this pseudorandom behavior in the language of Fourier complexity.

From the more general point of view of trace functions, the uniform boundedness of the normalized Fourier transform is consistent with the bounded-conductor theory developed in arithmetic geometry; see, for example, \cite{FKM}. The contribution of the present paper is not merely the boundedness of the individual Fourier coefficients, but the explicit combination of that bound with an exact second-moment calculation and the resulting consequences for Fourier ratio, approximation, metric complexity, and bounded-precision sampling.

From the point of view of approximation theory, a large Fourier Ratio indicates that the function $f$ has high spectral complexity. In particular, $f$ cannot be efficiently approximated by sparse Fourier models. We prove this in the strongest natural $L^2$ form: every Fourier polynomial supported on fewer than a sufficiently small constant multiple of $p$ frequencies remains a fixed positive distance from $f$.

More significantly, the Fourier mass of $f$ is sufficiently spread out that one can construct an exponentially large family of real-valued functions with identical Fourier magnitude profile and identical Fourier ratio. These functions are pairwise separated by a constant multiple of $\sqrt p$ in $\ell^2$. Thus a single Fourier magnitude profile is compatible with exponentially many well-separated signals.

As a consequence, any deterministic reconstruction procedure that is required to recover every member of this family from point evaluations, with a fixed number of bits used to encode each observed value, requires at least $cp$ samples for uniform recovery to error smaller than a fixed constant multiple of $\sqrt p$.

Our main result is the following.

\begin{theorem}[Main Result]\label{thm:main}
Let $p\ge 5$ be a prime and let
$$
f(t)=\frac{a_p(E_t)}{\sqrt p},
\qquad
E_t:\ y^2=x(x-1)(x-t),
$$
with $f(0)=f(1)=0$. Then the following hold:

\begin{enumerate}
\item The Fourier ratio satisfies
$$
FR(f)=\frac{\|\widehat f\|_1}{\|\widehat f\|_2}\asymp \sqrt p.
$$

\item There exist absolute constants $c,c'>0$ such that, whenever $\Lambda\subset\mathbb F_p$ satisfies $|\Lambda|\le c'p$, every Fourier polynomial
$$
T(t)=p^{-1/2}\sum_{m\in\Lambda}c_m\chi(mt)
$$
satisfies
$$
\|f-T\|_2\ge c\|f\|_2.
$$

\item There exists a family $\mathcal F_f$ of real-valued functions on $\mathbb F_p$ such that
$$
|\mathcal F_f|\ge \exp(cp),
$$
every $g\in\mathcal F_f$ satisfies
$$
|\widehat g(m)|=|\widehat f(m)|
\qquad\text{for all }m\in\mathbb F_p,
$$
and any two distinct $g,h\in\mathcal F_f$ satisfy
$$
\|g-h\|_2\ge c\sqrt p.
$$
In particular, every member of $\mathcal F_f$ has the same Fourier ratio as $f$.

\item Fix a positive integer $B$. Suppose that a deterministic reconstruction procedure is required to reconstruct every function in $\mathcal F_f$ to error less than $c\sqrt p/3$ in $\ell^2$, using $m$ point evaluations chosen either nonadaptively or adaptively, with each observed value encoded using at most $B$ bits. Then
$$
m\ge c_Bp,
$$
where $c_B>0$ depends only on $B$.

\end{enumerate}
\end{theorem}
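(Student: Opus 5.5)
The plan is to compute $\widehat f$ explicitly via characters and then derive the four parts in order. Write $\eta$ for the quadratic character of $\mathbb F_p$, extended by $\eta(0)=0$, and use the normalization $\widehat f(m)=p^{-1/2}\sum_t f(t)\chi(-mt)$, so that Plancherel reads $\|\widehat f\|_2=\|f\|_2$.

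\textbf{Fourier coefficients and second moment.} For $t\ne 0,1$ we have
\[
a_p(E_t)=-\sum_x \eta\bigl(x(x-1)(x-t)\bigr).
\]
The defining sum for $t=0,1$ does not vanish, so we must subtract a correction of size $O(1)$ there; this changes each $\widehat f(m)$ by only $O(p^{-1})$.

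Interchanging the sums, and using $\eta(x-t)$ with $\sum_t \eta(x-t)\chi(-mt)=\chi(-mx)\,\overline{\mathcal G}(m)$-type identities, gives
\[
\widehat f(m)=-\frac{1}{p}\sum_x \eta\bigl(x(x-1)\bigr)\sum_t \eta(x-t)\chi(-mt).
\]
The inner sum equals $\chi(-mx)\,\eta(-m)\,G$, where $G$ is the quadratic Gauss sum with $|G|=\sqrt p$, for $m\ne0$. Hence
\[
|\widehat f(m)|=p^{-1/2}\Bigl|\sum_x \eta\bigl(x(x-1)\bigr)\chi(-mx)\Bigr|+O(p^{-1}).
\]
The Weil bound for mixed character sums then gives $|\widehat f(m)|\le 2+o(1)$ uniformly in $m$, while $\widehat f(0)=O(p^{-1/2})$.

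For the second moment, $\|\widehat f\|_2^2=\|f\|_2^2=p^{-1}\sum_t a_p(E_t)^2$. Expanding the square produces
\[
\sum_{x,y}\eta\bigl(x(x-1)y(y-1)\bigr)\sum_t \eta\bigl((x-t)(y-t)\bigr).
\]
The inner sum equals $p-1$ if $x=y$ and $-1$ otherwise. This yields exactly $p^2+O(p)$ for the total, so $\|f\|_2^2=p+O(1)$.

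\textbf{Parts (1) and (2).} For (1), since every $|\widehat f(m)|\le C$, we have $\|\widehat f\|_1\ge \|\widehat f\|_2^2/C\gtrsim p$, while Cauchy--Schwarz gives $\|\widehat f\|_1\le\sqrt p\,\|\widehat f\|_2$. Together these give $FR(f)\asymp\sqrt p$.

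For (2), Plancherel gives
\[
\|f-T\|_2^2\ge\sum_{m\notin\Lambda}|\widehat f(m)|^2\ge \|f\|_2^2-C^2|\Lambda|\ge \|f\|_2^2-C^2c'p.
\]
Choosing $c'$ small, this is at least $\tfrac12\|f\|_2^2$.

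\textbf{Part (3).} Let $S=\{m: |\widehat f(m)|\ge \delta\}$. By the second moment and the uniform bound, $|S|\ge c_1p$. Since $f$ is real, $\widehat f(-m)=\overline{\widehat f(m)}$. Choose a set $S^+$ of representatives of the pairs $\{m,-m\}$ with $m\ne0$ in $S$, so $|S^+|\ge c_1p/2$. For $\epsilon\in\{\pm1\}^{S^+}$, define $g_\epsilon$ by
\[
\widehat g_\epsilon(m)=\epsilon_m\widehat f(m),\qquad \widehat g_\epsilon(-m)=\epsilon_m\widehat f(-m)\quad (m\in S^+),
\]
and $\widehat g_\epsilon=\widehat f$ elsewhere. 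Each $g_\epsilon$ is real-valued and has the same Fourier magnitudes as $f$. Moreover
\[
\|g_\epsilon-g_{\epsilon'}\|_2^2\ge 8\delta^2\, d_H(\epsilon,\epsilon'),
\]
where $d_H$ is Hamming distance. A Gilbert--Varshamov code in $\{\pm1\}^{S^+}$ with minimum distance $|S^+|/4$ has size at least $e^{c|S^+|}$. The resulting family has pairwise separation $\gtrsim\delta\sqrt p$.

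\textbf{Part (4).} This is a counting argument. A deterministic procedure, adaptive or not, with $m$ queries each returning $B$ bits, has at most $2^{Bm}$ distinct transcripts, and the output is a function of the transcript. Two members of the family at distance at least $c\sqrt p$ cannot share an output within $c\sqrt p/3$ of both, by the triangle inequality. So the number of transcripts must be at least $|\mathcal F_f|$, i.e. $2^{Bm}\ge e^{cp}$, which gives $m\ge cp/(B\log 2)$.

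\textbf{Main obstacle.} The main obstacle is the careful Gauss-sum bookkeeping in the first step: getting the uniform $O(1)$ bound on $\widehat f$, including the endpoint corrections at $t=0,1$ and the degenerate frequency $m=0$, and applying the Weil bound to $\eta(x(x-1))\chi(-mx)$, which is legitimate because $x(x-1)$ is not a square polynomial.
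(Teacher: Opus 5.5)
Your proposal is correct and follows essentially the same route as the paper. The shared steps are: the character-sum formula for $a_p(E_t)$ with an $O(p^{-1})$ Fourier correction from $t=0,1$; the Gauss-sum evaluation plus the Weil bound giving $\|\widehat f\|_\infty\le 2+O(p^{-1})$; an exact second moment; then $\|\widehat f\|_2^2\le\|\widehat f\|_\infty\|\widehat f\|_1$ together with Cauchy--Schwarz; the tail bound $\sum_{m\notin\Lambda}|\widehat f(m)|^2$; a Gilbert--Varshamov family of sign flips on symmetric pairs $\{m,-m\}$; and transcript counting.

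The only differences are cosmetic. You obtain $\|f\|_2^2=p-2-3/p$ by expanding the square and using that $\sum_t\eta((x-t)(y-t))$ equals $p-1$ or $-1$ according as $x=y$ or not, whereas the paper applies Plancherel to $\phi(x(x-1))$. Also, the inner sum is actually $\chi(-mx)\eta(m)G$ rather than $\chi(-mx)\eta(-m)G$, which is harmless since only $|\widehat f(m)|$ is used.
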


The first two conclusions of the theorem concern the normalized Frobenius trace itself. The final two conclusions concern a larger signal class generated by its Fourier magnitude profile. The members of $\mathcal F_f$ need not themselves arise as Frobenius traces of elliptic curves, and they need not satisfy the pointwise Hasse bound. The role of the Legendre family is to provide an explicit arithmetic signal whose Fourier magnitude profile is sufficiently flat to support an exponentially large collection of pairwise separated signals. Accordingly, the sampling lower bound should be understood as a complexity consequence of this arithmetic Fourier profile, rather than as a lower bound for recovering the single function $f$.

The theorem shows that the normalized Frobenius trace has maximal-order Fourier ratio. In particular, any sparse Fourier approximation using fewer than $c'p$ frequencies must incur $L^2$ error at least a constant fraction of $\|f\|_2$.

More significantly, the function $f$ generates an exponentially large family of signals with identical Fourier magnitudes and identical Fourier ratio that are separated in $\ell^2$. This means that there are exponentially many distinct signals consistent with the same spectral magnitude data.

A sampling scheme based on $m$ point evaluations with bounded precision can distinguish at most $\exp(Cm)$ different signals. Since the construction above produces $\exp(cp)$ pairwise separated signals, accurate reconstruction requires
$$
\exp(Cm) \ge \exp(cp),
$$
which forces $m \ge cp$.

This paper is organized as follows. In Section~\ref{section:FR}, we compute the Fourier Ratio of the normalized trace $f$. In Section~\ref{section:VC}, we prove sparse approximation lower bounds and establish the local complexity result (Proposition~\ref{prop:local-family}). Section~\ref{section:proof} assembles these results to prove Theorem~\ref{thm:main}. In Section~\ref{section:sampling}, we derive the sampling complexity consequence. We conclude in Section~\ref{section:conclusion} with a discussion of generalizations and open problems.

\section{Fourier Ratio for the Legendre Family}
\label{section:FR}

Fix a prime $p\ge 5$. We write ${\mathbb F}_p$ for the field with $p$ elements. Let $\phi$ denote the quadratic character on ${\mathbb F}_p^\times$, defined by
$$
\phi(u)=
\begin{cases}
1 & \text{if $u$ is a nonzero square in ${\mathbb F}_p$}, \\
-1 & \text{if $u$ is a non-square in ${\mathbb F}_p$},
\end{cases}
$$
and extended to ${\mathbb F}_p$ by setting $\phi(0)=0$. Let
$$
\chi(u)=e^{\frac{2\pi i u}{p}},
\qquad u\in {\mathbb F}_p,
$$
be the standard additive character on ${\mathbb F}_p$.

For $t\in {\mathbb F}_p\setminus\{0,1\}$, consider the elliptic curve
$$
E_t:\ y^2=x(x-1)(x-t),
$$
and define
$$
f(t)=\frac{a_p(E_t)}{\sqrt p}.
$$
We extend $f$ to all of ${\mathbb F}_p$ by setting
$$
f(0)=f(1)=0.
$$

For a function $F:{\mathbb F}_p\to {\mathbb C}$, we define its normalized Fourier transform by
$$
\widehat{F}(m)=p^{-1/2}\sum_{x\in {\mathbb F}_p} F(x)\chi(-mx),
\qquad m\in {\mathbb F}_p.
$$
We write
$$
\|F\|_1=\sum_{x\in{\mathbb F}_p}|F(x)|,
\qquad
\|F\|_2=\left(\sum_{x\in{\mathbb F}_p}|F(x)|^2\right)^{1/2}.
$$
Finally, if $A$ and $B$ are nonnegative quantities, we write
$$
A\ll B
$$
or equivalently
$$
A=O(B)
$$
to mean that there exists an absolute constant $C>0$ such that
$$
A\le CB.
$$
We write
$$
A\asymp B
$$
to mean that
$$
A\ll B \quad \text{and} \quad B\ll A.
$$
By an absolute constant we mean a constant independent of $p$.

The main result of this section is the following.

\begin{theorem}\label{thm:LegendreFourierRatio}
Let $p\ge 5$ be a prime. There exist absolute constants $c,C>0$ such that
$$
c\sqrt p
\le
\frac{\|\widehat f\|_1}{\|\widehat f\|_2}
\le
C\sqrt p.
$$
More precisely,
$$
\|\widehat f\|_2^2=p-2-\frac{3}{p},
$$
and there is an absolute constant $C_0>0$ such that
$$
\|\widehat f\|_\infty\le C_0.
$$
In particular,
$$
\frac{\|\widehat f\|_1}{\|\widehat f\|_2}
\asymp
\sqrt p.
$$
\end{theorem}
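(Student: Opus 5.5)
The plan has three parts: compute $\|\widehat f\|_2^2$ exactly via Plancherel and character orthogonality, bound $\|\widehat f\|_\infty$ via the Gauss sum and the Weil bound, and combine the two by elementary inequalities. Throughout I use the standard formula
$$
a_p(E_t)=-\sum_{x\in\mathbb F_p}\phi\bigl(x(x-1)(x-t)\bigr).
$$
It is convenient to define $A(t)$ by the same sum for \emph{every} $t\in\mathbb F_p$. Then $f(t)=A(t)/\sqrt p$ for $t\ne 0,1$, and the values at $t=0,1$ are treated as boundary corrections.

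\textbf{Second moment.} Since the normalized transform is unitary, $\|\widehat f\|_2^2=\|f\|_2^2=p^{-1}\sum_{t\ne0,1}A(t)^2$.

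First sum over all $t\in\mathbb F_p$. Expanding the square gives
$$
\sum_t A(t)^2=\sum_{x,y}\phi\bigl(x(x-1)\bigr)\phi\bigl(y(y-1)\bigr)\sum_t\phi\bigl((x-t)(y-t)\bigr).
$$
The inner sum equals $p-1$ if $x=y$ and $-1$ otherwise; this is the standard evaluation $\sum_t\phi(t^2+bt+c)=-1$ for nonzero discriminant. Hence
$$
\sum_t A(t)^2=p\sum_x\phi\bigl(x(x-1)\bigr)^2-\Bigl(\sum_x\phi\bigl(x(x-1)\bigr)\Bigr)^2=p(p-2)-1 .
$$

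Next remove the boundary terms:
$$
A(0)=-\sum_{x\ne0}\phi(x-1)=\phi(-1),\qquad A(1)=-\sum_{x\ne1}\phi(x)=1,
$$
so each contributes $1$ to the sum of squares. Therefore $\sum_{t\ne0,1}A(t)^2=p^2-2p-3$, which gives $\|\widehat f\|_2^2=p-2-3/p$.

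\textbf{Uniform bound on $\widehat f$.} Write
$$
\widehat f(m)=p^{-1}\Bigl(\sum_{t\in\mathbb F_p}A(t)\chi(-mt)-A(0)-A(1)\chi(-m)\Bigr).
$$
The last two terms contribute $O(1/p)$. For the full sum, interchange the order of summation and substitute $u=x-t$:
$$
\sum_t\phi(x-t)\chi(-mt)=\chi(-mx)\sum_u\phi(u)\chi(mu)=\chi(-mx)\,\phi(m)\,g,
$$
where $g$ is the quadratic Gauss sum. For $m=0$ this sum vanishes instead. Hence for $m\ne0$,
$$
\Bigl|\sum_tA(t)\chi(-mt)\Bigr|=\sqrt p\,\Bigl|\sum_x\phi\bigl(x(x-1)\bigr)\chi(-mx)\Bigr|\le\sqrt p\cdot 2\sqrt p .
$$
The inequality is the Weil bound for mixed character sums: $x(x-1)$ is not a constant times a square, and the additive phase is linear and nontrivial. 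This gives $|\widehat f(m)|\le 2+O(1/p)$, so $\|\widehat f\|_\infty\le C_0$.

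The main step needing care is this one. One must check that the mixed sum meets the hypotheses of Weil's bound, and must handle $m=0$ and the two excluded parameters $t=0,1$ separately. Everything else is exact bookkeeping.

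\textbf{Ratio.} For the upper bound, Cauchy--Schwarz over the $p$ frequencies gives $\|\widehat f\|_1\le\sqrt p\,\|\widehat f\|_2$. For the lower bound,
$$
\|\widehat f\|_2^2\le\|\widehat f\|_\infty\|\widehat f\|_1,
$$
so
$$
\frac{\|\widehat f\|_1}{\|\widehat f\|_2}\ge\frac{\|\widehat f\|_2}{C_0}=\frac{\sqrt{p-2-3/p}}{C_0}\gg\sqrt p,
$$
using $p\ge5$. Together these give $\|\widehat f\|_1/\|\widehat f\|_2\asymp\sqrt p$.
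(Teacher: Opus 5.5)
Your proof is correct. The uniform bound on $\widehat f$ and the final ratio argument follow the paper essentially step for step: the substitution $u=x-t$, the Gauss-sum factorization, Weil's bound $2\sqrt p$ for $\sum_x\phi(x(x-1))\chi(-mx)$, an $O(1/p)$ correction from $t=0,1$, and then Cauchy--Schwarz together with $\|\widehat f\|_2^2\le\|\widehat f\|_\infty\|\widehat f\|_1$.

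The one genuine difference is the exact second moment. You compute it in physical space. Expanding $A(t)^2$ and using the autocorrelation identity $\sum_t\phi\bigl((x-t)(y-t)\bigr)=p\,\mathbf{1}_{x=y}-1$ gives $\sum_tA(t)^2=p(p-2)-1$ at once. The paper works on the Fourier side. It sets $g=A/\sqrt p$ on all of $\mathbb F_p$ (the paper's $g$ is this function, not your Gauss sum) and $q(x)=\phi(x(x-1))$. Its formula for $\widehat g(m)$, together with $|\tau(\phi)|=\sqrt p$, yields $\widehat g(0)=0$ and $|\widehat g(m)|=|\widehat q(m)|$ for $m\neq0$. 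Plancherel for $q$ then gives $\|\widehat g\|_2^2=\|q\|_2^2-|\widehat q(0)|^2=p-2-1/p$, which is your $p^{-1}\sum_tA(t)^2$.

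The two computations are Fourier-dual. Your autocorrelation identity for $\phi$ is exactly the physical-space form of $|\tau(\phi)|^2=p$ together with $\sum_u\phi(u)=0$. Both ultimately rest on $\sum_x\phi(x(x-1))=-1$, since your general quadratic identity reduces to it by an affine change of variables.

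Your route makes the $L^2$ identity independent of the Gauss sum and of the Fourier formula, so it stands on its own as an elementary computation. The paper's route recycles the formula already needed for the sup-norm bound. As a byproduct, it shows that $g$ and $q$ have identical Fourier magnitudes at every nonzero frequency.
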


\begin{proof}
For $t\in {\mathbb F}_p\setminus\{0,1\}$, one has the standard character-sum formula
$$
a_p(E_t)=-\sum_{x\in {\mathbb F}_p}\phi(x(x-1)(x-t)).
$$
Define
$$
g(t):=-p^{-1/2}\sum_{x\in {\mathbb F}_p}\phi(x(x-1)(x-t)),
\qquad t\in {\mathbb F}_p.
$$
Then
$$
g(t)=f(t)
\qquad \text{for } t\neq 0,1.
$$

Set
$$
h=f-g.
$$
Then $h$ is supported on the set $\{0,1\}$.

We first compute the exceptional values exactly. At $t=0$,
$$
g(0)
=
-p^{-1/2}\sum_{x\in\mathbb F_p}\phi(x^2(x-1))
=
-p^{-1/2}\sum_{x\ne 0}\phi(x-1)
=
\frac{\phi(-1)}{\sqrt p}.
$$
Similarly,
$$
g(1)
=
-p^{-1/2}\sum_{x\in\mathbb F_p}\phi(x(x-1)^2)
=
-p^{-1/2}\sum_{x\ne 1}\phi(x)
=
\frac{1}{\sqrt p}.
$$
Since $f(0)=f(1)=0$, it follows that
$$
h(0)=-\frac{\phi(-1)}{\sqrt p},
\qquad
h(1)=-\frac{1}{\sqrt p}.
$$
Consequently,
$$
\|h\|_2=\left(\frac{2}{p}\right)^{1/2}.
$$
Moreover, for every $m\in\mathbb F_p$,
$$
|\widehat h(m)|
\le
p^{-1/2}\bigl(|h(0)|+|h(1)|\bigr)
=
\frac{2}{p}.
$$
Hence
$$
\|\widehat h\|_1\le 2
\qquad\text{and}\qquad
\|\widehat h\|_2=\|h\|_2=\left(\frac{2}{p}\right)^{1/2}.
$$

We now compute $\widehat g(m)$. By definition,
\begin{align*}
\widehat g(m)
&=
-p^{-1}\sum_{t\in{\mathbb F}_p}\sum_{x\in{\mathbb F}_p}
\phi(x(x-1)(x-t))\chi(-mt) \\
&=
-p^{-1}\sum_{x\in{\mathbb F}_p}\phi(x(x-1))
\sum_{t\in{\mathbb F}_p}\phi(x-t)\chi(-mt).
\end{align*}
If $m=0$, then
$$
\sum_{t\in{\mathbb F}_p}\phi(x-t)=\sum_{u\in{\mathbb F}_p}\phi(u)=0,
$$
so
$$
\widehat g(0)=0.
$$

Suppose now that $m\neq 0$. Making the change of variables $u=x-t$, we obtain
\begin{align*}
\sum_{t\in{\mathbb F}_p}\phi(x-t)\chi(-mt)
&=
\sum_{u\in{\mathbb F}_p}\phi(u)\chi(-m(x-u)) \\
&=
\chi(-mx)\sum_{u\in{\mathbb F}_p}\phi(u)\chi(mu).
\end{align*}
Let
$$
\tau(\phi)=\sum_{u\in{\mathbb F}_p}\phi(u)\chi(u)
$$
be the quadratic Gauss sum. Then
$$
\sum_{u\in{\mathbb F}_p}\phi(u)\chi(mu)=\phi(m)\tau(\phi),
$$
and therefore
\begin{equation}\label{eq:ghatformula}
\widehat g(m)
=
-\frac{\tau(\phi)}{p}\phi(m)
\sum_{x\in{\mathbb F}_p}\phi(x(x-1))\chi(-mx),
\qquad m\neq 0.
\end{equation}
Since
$$
|\tau(\phi)|=\sqrt p,
$$
equation~\eqref{eq:ghatformula} and the Weil bound for mixed character sums give
$$
|\widehat g(m)|
\le
p^{-1/2}\cdot 2\sqrt p
=
2
$$
for every $m\neq 0$. Since $\widehat g(0)=0$, it follows that
\begin{equation}\label{eq:Linftybound}
|\widehat g(m)|\le 2
\qquad\text{for all }m\in\mathbb F_p.
\end{equation}

Next we compute $\|\widehat g\|_2$ exactly. Define
$$
q(x)=\phi(x(x-1)),
\qquad x\in\mathbb F_p.
$$
For $m\ne 0$, equation~\eqref{eq:ghatformula} and the identity $|\tau(\phi)|=\sqrt p$ give
$$
|\widehat g(m)|
=
p^{-1/2}\left|\sum_{x\in\mathbb F_p}q(x)\chi(-mx)\right|
=
|\widehat q(m)|.
$$
Since $\widehat g(0)=0$, Plancherel gives
$$
\|\widehat g\|_2^2
=
\sum_{m\ne 0}|\widehat q(m)|^2
=
\|q\|_2^2-|\widehat q(0)|^2.
$$
Now $|q(x)|=1$ for $x\notin\{0,1\}$ and $q(0)=q(1)=0$, so
$$
\|q\|_2^2=p-2.
$$
The standard quadratic character identity
$$
\sum_{x\in\mathbb F_p}\phi(x(x-1))=-1
$$
gives
$$
\widehat q(0)=-p^{-1/2}.
$$
Therefore
\begin{equation}\label{eq:gL2exact}
\|\widehat g\|_2^2=p-2-\frac{1}{p}.
\end{equation}
By Plancherel, the same identity holds for $\|g\|_2^2$. Since $f=g$ away from $0$ and $1$, while $f(0)=f(1)=0$, we obtain
\begin{align*}
\|f\|_2^2
&=\|g\|_2^2-|g(0)|^2-|g(1)|^2 \\
&=p-2-\frac{1}{p}-\frac{2}{p}
=p-2-\frac{3}{p}.
\end{align*}
Thus
\begin{equation}\label{eq:fL2exact}
\|\widehat f\|_2^2=p-2-\frac{3}{p}.
\end{equation}

We now estimate $\|\widehat f\|_1$. First, \eqref{eq:Linftybound} and the estimate $|\widehat h(m)|\le 2/p$ imply, after enlarging $C_0$ if necessary, that
$$
|\widehat f(m)|\le C_0
\qquad\text{for all }m\in\mathbb F_p.
$$
Consequently,
$$
\|\widehat f\|_2^2
=
\sum_{m\in\mathbb F_p}|\widehat f(m)|^2
\le
\|\widehat f\|_\infty\|\widehat f\|_1
\le
C_0\|\widehat f\|_1.
$$
Using \eqref{eq:fL2exact}, we conclude that
\begin{equation}\label{eq:L1lower}
\|\widehat f\|_1\gg p.
\end{equation}

For the upper bound, the Cauchy--Schwarz inequality gives
$$
\|\widehat f\|_1\le \sqrt p\,\|\widehat f\|_2.
$$
Since \eqref{eq:fL2exact} implies $\|\widehat f\|_2\asymp\sqrt p$, we obtain
\begin{equation}\label{eq:L1upper}
\|\widehat f\|_1\ll p.
\end{equation}

Combining \eqref{eq:L1lower} and \eqref{eq:L1upper}, we get
$$
\|\widehat f\|_1\asymp p,
\qquad
\|\widehat f\|_2\asymp\sqrt p.
$$
Therefore
$$
\frac{\|\widehat f\|_1}{\|\widehat f\|_2}\asymp\sqrt p.
$$
This completes the proof.
\end{proof}

\begin{remark}
The theorem shows that the Legendre family gives a function on $\mathbb F_p$ with large Fourier ratio:
$$
\|\widehat f\|_1\asymp p
\qquad\text{and}\qquad
\|\widehat f\|_2\asymp\sqrt p.
$$
Thus the Fourier mass of $f$ is spread across $\asymp p$ frequencies at essentially bounded size. An analogous question for the M\"obius function is studied in \cite{BursteinIosevichSant2026}. In that setting, one obtains an unconditional lower bound for the Fourier ratio, while a substantially stronger lower bound follows from a square-root cancellation hypothesis for the relevant exponential sums. For the Legendre family, the corresponding boundedness and $L^2$ estimates are available unconditionally.
\end{remark}

\section{Sparse approximation lower bounds via Fourier complexity}
\label{section:VC}

In this section we show that functions with large Fourier Ratio cannot be efficiently approximated by sparse Fourier models. The argument proceeds directly from Fourier-analytic considerations.

\subsection{Approximation by sparse Fourier models}

Let $g:\mathbb{F}_p\to\mathbb{C}$. For a subset $\Lambda\subset\mathbb{F}_p$, define the Fourier projection
$$
P_\Lambda g(t)=p^{-1/2}\sum_{m\in\Lambda}\widehat g(m)\chi(mt).
$$
Thus $P_\Lambda g$ is obtained from the Fourier inversion formula by retaining only the frequencies in $\Lambda$. Moreover, $P_\Lambda g$ is the best $L^2$ approximation to $g$ among all Fourier polynomials supported on $\Lambda$.

The following lemma records both the approximation error and this optimality property.

\begin{lemma}\label{lem:sparse-approx}
Let $g:\mathbb{F}_p\to\mathbb{C}$ and let $\Lambda\subset\mathbb{F}_p$. Then
$$
\|g-P_\Lambda g\|_2^2
=
\sum_{m\notin\Lambda}|\widehat g(m)|^2.
$$
Furthermore, if
$$
T(t)=p^{-1/2}\sum_{m\in\Lambda}c_m\chi(mt),
$$
then
$$
\|g-T\|_2^2
=
\|g-P_\Lambda g\|_2^2
+
\|P_\Lambda g-T\|_2^2.
$$
In particular,
$$
\|g-T\|_2\ge \|g-P_\Lambda g\|_2.
$$
\end{lemma}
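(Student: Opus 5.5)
The plan is to work entirely on the Fourier side, using Plancherel for the unitary normalized transform $\widehat{F}(m)=p^{-1/2}\sum_x F(x)\chi(-mx)$. This normalization gives $\|F\|_2=\|\widehat F\|_2$, so both identities reduce to statements about coefficients.

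First, compute the Fourier transform of a Fourier polynomial. For $T(t)=p^{-1/2}\sum_{m\in\Lambda}c_m\chi(mt)$, orthogonality of characters, $\sum_{t}\chi((m-n)t)=p\,\mathbf 1_{m=n}$, gives
$$
\widehat T(n)=p^{-1}\sum_{m\in\Lambda}c_m\sum_t\chi((m-n)t)=c_n\mathbf 1_{\Lambda}(n).
$$
In particular, taking $c_m=\widehat g(m)$ shows $\widehat{P_\Lambda g}=\widehat g\,\mathbf 1_\Lambda$. This is just Fourier inversion restricted to $\Lambda$.

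For the first identity, note that $\widehat{g-P_\Lambda g}=\widehat g\,\mathbf 1_{\Lambda^c}$. Plancherel then gives $\|g-P_\Lambda g\|_2^2=\sum_{m\notin\Lambda}|\widehat g(m)|^2$.

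For the second identity, write $g-T=(g-P_\Lambda g)+(P_\Lambda g-T)$. The first summand has Fourier transform supported on $\Lambda^c$. The second has Fourier transform $(\widehat g(n)-c_n)\mathbf 1_\Lambda(n)$, which is supported on $\Lambda$. By Parseval the inner product of the two summands equals $\sum_n \widehat{(g-P_\Lambda g)}(n)\overline{\widehat{(P_\Lambda g-T)}(n)}=0$, since the supports are disjoint. The Pythagorean identity follows. Dropping the nonnegative term $\|P_\Lambda g-T\|_2^2$ gives the final inequality.

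No step here is a real obstacle. The only care needed is to check that the inversion convention $F(t)=p^{-1/2}\sum_m\widehat F(m)\chi(mt)$ matches the sign in the definition of $\widehat F$, so that $\widehat T(n)=c_n$ exactly, with no conjugation or reflection $n\mapsto -n$.
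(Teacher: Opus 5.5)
Your proposal is correct and follows the paper's proof: Plancherel on $\widehat{g-P_\Lambda g}=\widehat g\,\mathbf 1_{\Lambda^c}$ gives the first identity, and the disjoint Fourier supports of $g-P_\Lambda g$ and $P_\Lambda g-T$ give orthogonality and hence the Pythagorean identity. You also check explicitly, via character orthogonality, that $\widehat T(n)=c_n\mathbf 1_\Lambda(n)$; the paper leaves this step implicit, and your computation matches its sign convention.
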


\begin{proof}
The normalized Fourier inversion formula gives
$$
g(t)-P_\Lambda g(t)
=
p^{-1/2}\sum_{m\notin\Lambda}\widehat g(m)\chi(mt).
$$
Plancherel therefore yields
$$
\|g-P_\Lambda g\|_2^2
=
\sum_{m\notin\Lambda}|\widehat g(m)|^2.
$$
Since $g-P_\Lambda g$ and $P_\Lambda g-T$ have disjoint Fourier supports, they are orthogonal in $L^2(\mathbb F_p)$. Hence
$$
\|g-T\|_2^2
=
\|(g-P_\Lambda g)+(P_\Lambda g-T)\|_2^2
=
\|g-P_\Lambda g\|_2^2
+
\|P_\Lambda g-T\|_2^2.
$$
The final assertion follows immediately.
\end{proof}

\subsection{Consequence of large Fourier Ratio}

The argument below is consistent with a general principle developed elsewhere, namely that large Fourier Ratio forces high spectral and sampling complexity. We give a direct proof tailored to the present setting, which is consistent with the general framework developed in the literature.

We now show that large Fourier Ratio forces poor approximation by any sparse Fourier model. The key is the uniform bound $|\widehat{g}(m)| \le C_0$ from the Weil bound, which allows us to handle $k=O(p)$ rather than just $O(\sqrt p)$.

\begin{proposition}\label{prop:FR-hard}
Let $g:\mathbb F_p\to\mathbb C$ satisfy
$$
\|\widehat g\|_1\asymp p,
\qquad
\|\widehat g\|_\infty\le C_0,
\qquad
\|\widehat g\|_2\asymp\sqrt p.
$$
Then there exist absolute constants $c,c'>0$ such that, for every set $\Lambda\subset\mathbb F_p$ with $|\Lambda|\le c'p$ and every Fourier polynomial $T$ supported on $\Lambda$,
$$
\|g-T\|_2\ge c\|g\|_2.
$$
\end{proposition}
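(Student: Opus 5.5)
The plan is to reduce everything to Lemma~\ref{lem:sparse-approx} and then use the uniform bound $\|\widehat g\|_\infty\le C_0$ to show that a set of few frequencies can capture only a small part of the $\ell^2$ mass of $\widehat g$. Let $\Lambda$ satisfy $|\Lambda|\le c'p$, and let $T$ be a Fourier polynomial supported on $\Lambda$. By Lemma~\ref{lem:sparse-approx},
$$
\|g-T\|_2^2\ \ge\ \|g-P_\Lambda g\|_2^2=\sum_{m\notin\Lambda}|\widehat g(m)|^2=\|\widehat g\|_2^2-\sum_{m\in\Lambda}|\widehat g(m)|^2 .
$$
It therefore suffices to bound the retained mass $\sum_{m\in\Lambda}|\widehat g(m)|^2$ from above by a small fraction of $\|\widehat g\|_2^2$.

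For this I would use the pointwise bound:
$$
\sum_{m\in\Lambda}|\widehat g(m)|^2\le C_0^2|\Lambda|\le C_0^2c'p.
$$
By the hypothesis $\|\widehat g\|_2\asymp\sqrt p$, there is an absolute constant $a>0$ with $\|\widehat g\|_2^2\ge a p$. Choose $c'=a/(2C_0^2)$. Then the retained mass is at most $\tfrac12\|\widehat g\|_2^2$, so
$$
\|g-T\|_2^2\ge \tfrac12\|\widehat g\|_2^2=\tfrac12\|g\|_2^2,
$$
where the last equality is Plancherel. This gives the claim with $c=1/\sqrt2$. Both constants depend only on $C_0$ and the implied constant in $\|\widehat g\|_2\asymp\sqrt p$, so they are absolute.

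There is no serious obstacle. The only point needing care is bookkeeping: $c'$ must be chosen from the lower bound in $\|\widehat g\|_2\asymp\sqrt p$ and from $C_0$, and not from anything depending on $p$. The hypothesis $\|\widehat g\|_1\asymp p$ is not needed for this argument, since $\ell^\infty$ control together with the $\ell^2$ lower bound suffices. I would note this in a remark, observing that for the Legendre function $f$ all three hypotheses are supplied by Theorem~\ref{thm:LegendreFourierRatio}.
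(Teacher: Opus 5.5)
Your proof is correct and follows the paper's argument: you apply Lemma~\ref{lem:sparse-approx}, use the pointwise bound $\sum_{m\in\Lambda}|\widehat g(m)|^2\le C_0^2|\Lambda|$, and take $c'$ to be the $\ell^2$ lower-bound constant divided by $2C_0^2$. The only difference is minor bookkeeping: you compare the retained mass directly to $\tfrac12\|\widehat g\|_2^2$ and so get $c=1/\sqrt2$ without the upper bound $\|g\|_2\le B\sqrt p$, which the paper uses to reach $c=\sqrt{A}/(\sqrt2 B)$. As in the paper, the $\ell^1$ hypothesis is never used.
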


\begin{proof}
Choose absolute constants $A,B>0$ such that
$$
\|\widehat g\|_2^2\ge Ap
\qquad\text{and}\qquad
\|g\|_2=\|\widehat g\|_2\le B\sqrt p.
$$
For every $\Lambda\subset\mathbb F_p$, the uniform bound on the Fourier coefficients gives
$$
\sum_{m\in\Lambda}|\widehat g(m)|^2
\le
C_0^2|\Lambda|.
$$
Set
$$
c'=\frac{A}{2C_0^2}.
$$
If $|\Lambda|\le c'p$, then Lemma~\ref{lem:sparse-approx} gives
\begin{align*}
\|g-P_\Lambda g\|_2^2
&=\sum_{m\notin\Lambda}|\widehat g(m)|^2 \\
&=\|\widehat g\|_2^2-\sum_{m\in\Lambda}|\widehat g(m)|^2 \\
&\ge Ap-C_0^2|\Lambda| \\
&\ge \frac{A}{2}p.
\end{align*}
Since $\|g\|_2\le B\sqrt p$, it follows that
$$
\|g-P_\Lambda g\|_2
\ge
\frac{\sqrt{A}}{\sqrt{2}B}\|g\|_2.
$$
The optimality assertion in Lemma~\ref{lem:sparse-approx} now gives
$$
\|g-T\|_2\ge \|g-P_\Lambda g\|_2
$$
for every Fourier polynomial $T$ supported on $\Lambda$. This completes the proof.
\end{proof}

The argument above shows that the normalized Frobenius trace is globally difficult to approximate by sparse Fourier models. There is also a complementary manifestation of this complexity. Since
$$
\|\widehat f\|_2^2\asymp p
\qquad\text{and}\qquad
|\widehat f(m)|\le C_0,
$$
a positive proportion of the Fourier energy is carried by a set of $\asymp p$ frequencies on which the coefficients are bounded below by an absolute constant. By changing signs simultaneously on the paired frequencies $m$ and $-m$, one obtains an exponentially large family of real-valued functions with the same Fourier magnitude profile and the same Fourier ratio as $f$. These functions are pairwise separated by a constant multiple of $\sqrt p$. Thus a single Fourier magnitude profile is compatible with exponentially many well-separated signals.

An analogous phase-orbit packing argument, developed in a different context for the metric entropy of Fourier-ratio classes on $\mathbb Z_N$, can be found in \cite{IosevichHovhannisyanKeyshamsVagharshakyan2026}.

\begin{proposition}\label{prop:local-family}
There exist a symmetric set $\Lambda\subset\mathbb F_p$ with $|\Lambda|\asymp p$ and a family $\mathcal F_f$ of real-valued functions on $\mathbb F_p$ such that
$$
|\mathcal F_f|\ge\exp(cp)
$$
for some absolute constant $c>0$. Every $g\in\mathcal F_f$ satisfies
$$
|\widehat g(m)|=|\widehat f(m)|
\qquad\text{for all }m\in\mathbb F_p,
$$
and hence
$$
FR(g)=FR(f).
$$
Moreover, if $g,h\in\mathcal F_f$ are distinct, then
$$
\|g-h\|_2\ge c\sqrt p.
$$
\end{proposition}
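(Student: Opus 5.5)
The plan is to realize the sign-flipping idea sketched before the statement: locate a large set of frequencies where $|\widehat f|$ is bounded below, flip signs on symmetric pairs, and extract a well-separated subfamily with a Gilbert--Varshamov (greedy packing) argument.

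\emph{Step 1: a large symmetric set of large coefficients.} Since $f$ is real, $\widehat f(-m)=\overline{\widehat f(m)}$, so $|\widehat f|$ is even. Let $\Lambda=\{m\neq 0: |\widehat f(m)|\ge \delta\}$, which is symmetric. By Theorem~\ref{thm:LegendreFourierRatio}, $\|\widehat f\|_2^2=p-2-3/p\ge p/2$ for $p\ge 5$, and $\|\widehat f\|_\infty\le C_0$. The contribution of frequencies outside $\Lambda$, including $m=0$, is at most $\delta^2 p+C_0^2$. Choosing $\delta^2=1/8$ gives $C_0^2|\Lambda|\ge p/2-p/8-C_0^2$, hence $|\Lambda|\gg p$ for $p$ large. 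The finitely many small $p$ are handled by shrinking $c$, since a family of size $2$ (for instance $f$ and $-f$, when $f\neq 0$) already suffices there. Write $\Lambda=\Lambda^+\cup(-\Lambda^+)$ with $|\Lambda^+|=n\asymp p$.

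\emph{Step 2: the family.} For $\varepsilon\in\{\pm1\}^{\Lambda^+}$, define $g_\varepsilon$ through its Fourier transform:
\[
\widehat{g_\varepsilon}(m)=\varepsilon_m\widehat f(m),\qquad \widehat{g_\varepsilon}(-m)=\varepsilon_m\widehat f(-m)\quad (m\in\Lambda^+),
\]
and $\widehat{g_\varepsilon}(m)=\widehat f(m)$ for $m\notin\Lambda$. Since the relation $\widehat{g_\varepsilon}(-m)=\overline{\widehat{g_\varepsilon}(m)}$ is preserved, each $g_\varepsilon$ is real-valued. The magnitudes $|\widehat{g_\varepsilon}(m)|=|\widehat f(m)|$ hold for all $m$, so $FR(g_\varepsilon)=FR(f)$ immediately. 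By Plancherel,
\[
\|g_\varepsilon-g_\eta\|_2^2=\sum_{m\in\Lambda^+:\,\varepsilon_m\neq\eta_m}4\bigl(|\widehat f(m)|^2+|\widehat f(-m)|^2\bigr)\ge 8\delta^2\, d_H(\varepsilon,\eta),
\]
where $d_H$ is Hamming distance.

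\emph{Step 3: packing.} Take a code $\mathcal C\subset\{\pm1\}^n$ with minimum Hamming distance at least $n/4$ and $|\mathcal C|\ge 2^{n}/V$, where $V=|B_H(n/4)|\le 2^{H(1/4)n}$. This exists by greedy selection, giving $|\mathcal C|\ge 2^{(1-H(1/4))n}=\exp(c_1 n)$. Set $\mathcal F_f=\{g_\varepsilon:\varepsilon\in\mathcal C\}$. Distinct members then satisfy
\[
\|g-h\|_2^2\ge 2\delta^2 n\gg p,
\]
and $|\mathcal F_f|\ge\exp(cp)$ because $n\asymp p$. A single constant $c$ is then chosen as the minimum of the constants appearing in the size and separation bounds.

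The only genuinely nontrivial input is Step~1, the positive-proportion lower bound on $|\Lambda|$. It rests entirely on combining the exact second moment with the uniform Weil bound $\|\widehat f\|_\infty\le C_0$, both supplied by Theorem~\ref{thm:LegendreFourierRatio}. The remaining care is bookkeeping: excluding $m=0$ from the sign flips and treating small $p$ separately.
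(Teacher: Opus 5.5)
Your proposal is correct and follows the paper's proof essentially step for step. Both arguments take the threshold set of large Fourier coefficients, sized using the exact second moment together with the uniform bound $C_0$, flip signs on the symmetric pairs $\{m,-m\}$, and extract a greedy Gilbert--Varshamov code at Hamming distance $n/4$, which gives $\|g-h\|_2^2\ge 2\delta^2 n$. One harmless slip: $p-2-3/p\ge p/2$ fails at $p=5$ (where $2.4<2.5$). This does not matter, since you only use the bound for large $p$ and treat small $p$ via $\{f,-f\}$, and $f\neq 0$ always holds because $\|f\|_2^2=p-2-3/p>0$.
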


\begin{proof}
From Theorem~\ref{thm:LegendreFourierRatio} and its proof, we have
$$
\sum_{m\in\mathbb F_p}|\widehat f(m)|^2\asymp p
\qquad\text{and}\qquad
|\widehat f(m)|\le C_0
$$
for every $m\in\mathbb F_p$. Since $f$ is real-valued,
$$
\widehat f(-m)=\overline{\widehat f(m)}.
$$

Fix a sufficiently small absolute constant $a>0$, and define
$$
\Lambda=\{m\in\mathbb F_p:|\widehat f(m)|\ge a\}.
$$
Then $\Lambda$ is symmetric. Moreover,
$$
\sum_{m\notin\Lambda}|\widehat f(m)|^2\le a^2p.
$$
Choosing $a$ sufficiently small gives
$$
\sum_{m\in\Lambda}|\widehat f(m)|^2\ge cp.
$$
Since $|\widehat f(m)|\le C_0$, it follows that
$$
|\Lambda|\ge cp.
$$
Thus $|\Lambda|\asymp p$.

Remove the frequency $0$ from $\Lambda$ if it is present, and choose a set $R\subset\mathbb F_p$ containing exactly one representative from each pair
$$
\{m,-m\}\subset\Lambda.
$$
Let $N=|R|$. Since $p$ is odd and $|\Lambda|\asymp p$, we have
$$
N\asymp p.
$$
After decreasing the absolute constants if necessary, the finitely many small primes are included as well.

We now choose a set
$$
\Sigma\subset\{-1,1\}^R
$$
such that
$$
|\Sigma|\ge\exp(cN)
$$
and any two distinct $\sigma,\tau\in\Sigma$ differ on at least $N/4$ elements of $R$. This follows from the standard greedy coding argument. Indeed, a Hamming ball of radius less than $N/4$ has cardinality at most
$$
\sum_{j<N/4}\binom Nj
\le
\exp(H(1/4)N),
$$
where
$$
H(x)=-x\log x-(1-x)\log(1-x).
$$
Since $H(1/4)<\log 2$, the greedy procedure produces at least
$$
\exp((\log 2-H(1/4))N)
$$
such sign patterns.

For each $\sigma\in\Sigma$, define $\varepsilon_\sigma:\mathbb F_p\to\{-1,1\}$ by
$$
\varepsilon_\sigma(m)=
\begin{cases}
\sigma(r), & m=r\text{ or }m=-r\text{ for some }r\in R,\\
1, & \text{otherwise}.
\end{cases}
$$
Define $f_\sigma$ by
$$
\widehat{f_\sigma}(m)
=
\varepsilon_\sigma(m)\widehat f(m).
$$
Since
$$
\varepsilon_\sigma(-m)=\varepsilon_\sigma(m),
$$
we have
$$
\widehat{f_\sigma}(-m)
=
\overline{\widehat{f_\sigma}(m)}.
$$
Thus every $f_\sigma$ is real-valued. Also,
$$
|\widehat{f_\sigma}(m)|=|\widehat f(m)|
$$
for every $m$, and therefore
$$
FR(f_\sigma)=FR(f).
$$

Set
$$
\mathcal F_f=\{f_\sigma:\sigma\in\Sigma\}.
$$
If $\sigma\ne\tau$, then by Plancherel,
$$
\|f_\sigma-f_\tau\|_2^2
=
\sum_{m\in\mathbb F_p}
|\widehat{f_\sigma}(m)-\widehat{f_\tau}(m)|^2.
$$
For each $r\in R$ on which $\sigma(r)\ne\tau(r)$, both frequencies $r$ and $-r$ contribute, and
$$
|\widehat{f_\sigma}(\pm r)-\widehat{f_\tau}(\pm r)|
=
2|\widehat f(r)|.
$$
Since $|\widehat f(r)|\ge a$ and $\sigma,\tau$ differ on at least $N/4$ elements of $R$, we obtain
$$
\|f_\sigma-f_\tau\|_2^2
\ge
\frac N4\cdot 8a^2
=
2a^2N
\ge
cp.
$$
Hence
$$
\|f_\sigma-f_\tau\|_2\ge c\sqrt p.
$$
This completes the proof.
\end{proof}

\section{Proof of the Main Result}
\label{section:proof}

We now assemble the pieces to prove Theorem~\ref{thm:main}.

\begin{proof}
The Fourier ratio statement follows from Theorem~\ref{thm:LegendreFourierRatio}.

The sparse approximation lower bound follows from Proposition~\ref{prop:FR-hard}, since Theorem~\ref{thm:LegendreFourierRatio} gives
$$
\|\widehat f\|_1 \asymp p,
\qquad
\|\widehat f\|_2 \asymp \sqrt p,
\qquad
|\widehat f(m)|\le C_0
$$
for all $m\in\mathbb F_p$.

The local complexity statement is exactly Proposition~\ref{prop:local-family}.

The sampling complexity statement is proved in Section~\ref{section:sampling}.
\end{proof}

\section{Sampling complexity consequences}
\label{section:sampling}

We now prove the sampling complexity statement in Theorem~\ref{thm:main}. Throughout this section, the reconstruction procedure is deterministic. It may choose its sampling points either nonadaptively or adaptively, but each observed value is encoded using at most $B$ bits.

Let $\mathcal F_f$ be the family constructed in Proposition~\ref{prop:local-family}. There are absolute constants $c_0,c_1>0$ such that
$$
|\mathcal F_f|\ge \exp(c_0p)
$$
and
$$
\|g-h\|_2\ge c_1\sqrt p
$$
whenever $g,h\in\mathcal F_f$ are distinct.

Suppose that a reconstruction procedure uses $m$ point evaluations and reconstructs every $g\in\mathcal F_f$ to error less than $c_1\sqrt p/3$ in $\ell^2$. Since each observed value is encoded using at most $B$ bits, each sample has at most $2^B$ possible encoded outcomes. Consequently, the total number of possible transcripts is at most
$$
2^{Bm}.
$$
This remains true for an adaptive procedure, because the location of each later sample is determined by the transcript obtained from the preceding samples. Equivalently, a deterministic adaptive procedure is represented by a decision tree of depth $m$ whose branching number is at most $2^B$.

Distinct functions in $\mathcal F_f$ must produce distinct transcripts. Indeed, if two distinct functions $g,h\in\mathcal F_f$ produced the same transcript, the procedure would return the same approximation $u$ for both functions. The triangle inequality would then give
$$
\|g-h\|_2
\le
\|g-u\|_2+\|h-u\|_2
<
\frac{2c_1\sqrt p}{3},
$$
contradicting
$$
\|g-h\|_2\ge c_1\sqrt p.
$$
Therefore
$$
\exp(c_0p)
\le
|\mathcal F_f|
\le
2^{Bm}.
$$
Taking logarithms gives
$$
m\ge \frac{c_0}{B\log 2}p.
$$
Thus
$$
m\ge c_Bp,
$$
where $c_B=c_0/(B\log 2)>0$ depends only on $B$. This proves the sampling complexity statement in Theorem~\ref{thm:main}.

\section{Conclusion and Open Problems}
\label{section:conclusion}

We have shown that the normalized Frobenius trace for the Legendre family of elliptic curves has Fourier ratio comparable to $\sqrt p$. Consequently, any sparse Fourier approximation using fewer than a sufficiently small constant multiple of $p$ frequencies incurs $L^2$ error at least a fixed fraction of $\|f\|_2$. We have also shown that the Fourier magnitude profile of $f$ generates an exponentially large family of pairwise separated signals with identical Fourier magnitudes and identical Fourier ratio. This yields a linear lower bound for uniform reconstruction of that family from bounded-precision point evaluations.

The arithmetic input is unconditional and relies only on the Weil bound for mixed character sums, the evaluation of the quadratic Gauss sum, and elementary character identities. The approximation and sampling consequences then follow from Plancherel, orthogonality, a coding argument, and transcript counting.

Several natural questions remain:

\begin{enumerate}

\item \textbf{Other families:} It is natural to ask which other one-parameter families of elliptic curves give rise to the same Fourier-ratio phenomenon. A first test case is
$$
E_t:\ y^2=x^3+x+t,
$$
with the parameters for which the discriminant vanishes omitted. More generally, one would like conditions on a family of elliptic curves, or on an associated family of $\ell$-adic sheaves, that guarantee uniformly bounded Fourier coefficients together with an $L^2$ norm of order $\sqrt p$. These two properties would imply a Fourier ratio of order $\sqrt p$ by the argument used in this paper. The trace-function framework suggests that the uniform Fourier bound should be available for broad bounded-conductor families, but the required nondegenerate second moment must also be verified.

\item \textbf{Arithmetic signal classes:} The family $\mathcal F_f$ is obtained by changing Fourier signs and need not consist of Frobenius trace functions. It would be considerably stronger to construct an exponentially large, pairwise separated class of actual arithmetic signals, such as Frobenius traces from a natural family or a family of quadratic twists, with a common or tightly controlled Fourier magnitude profile. Such a result would turn the present profile-based sampling obstruction into a genuinely arithmetic sampling theorem.

\item \textbf{Optimal constants:} Our proof gives the existence of constants $c,c'>0$ but does not optimize them. It would be interesting to determine the best possible constants for which the approximation lower bound holds. More detailed information about the distribution of the values $|\widehat f(m)|$ may lead to substantially sharper estimates than those obtained from the uniform bound and the second moment alone.

\item \textbf{Randomized and noisy reconstruction:} The sampling lower bound concerns deterministic reconstruction from bounded-precision point evaluations. It is natural to ask for analogous lower bounds for randomized algorithms, noisy observations, average-case recovery, or more general linear measurements. The large separated family constructed here is a natural starting point, but additional information-theoretic input would be needed.

\item \textbf{Connection to learning theory:} A meaningful learning-theoretic statement requires a hypothesis class rather than a single known function. The class $\mathcal F_f$ provides a natural metric-complexity obstruction, but it is not an arithmetic class. It would be interesting to identify a natural class of arithmetic trace functions for which one can prove VC-dimension, statistical-query, or distribution-independent sample-complexity lower bounds.

\item \textbf{Comparison with the M\"obius function:} In \cite{BursteinIosevichSant2026}, an unconditional lower bound for the Fourier ratio of the M\"obius function is obtained using recent estimates for its exponential sums. Under the stronger conjectural hypothesis
$$
\sup_{\theta\in\mathbb R}
\left|
\sum_{n\le R}\mu(n)e^{2\pi i n\theta}
\right|
\le
R^{1/2+o(1)},
$$
one obtains the substantially stronger conclusion
$$
FR(\mu_R)\ge R^{-o(1)}.
$$
The normalization in that continuous setting differs from the counting-norm normalization used here. For the Legendre family, the analogous boundedness of the Fourier coefficients and the required $L^2$ estimate are available unconditionally. It would be interesting to understand systematically which arithmetic families admit such unconditional Fourier-ratio bounds.

\end{enumerate}

\end{document}